\documentclass[11pt, reqno, twoside]{amsart}

\usepackage[top=1in,bottom=1in,left=1.2in,right=1.2in]{geometry}

\usepackage{amsmath,amssymb,amsthm,amsfonts,enumerate, mathrsfs, mathtools, bm}
\usepackage{appendix}
\usepackage{relsize}
\usepackage{stmaryrd}
\usepackage{esint}
\usepackage{xcolor}
\usepackage{graphicx}
\usepackage{tikz}
\usetikzlibrary{patterns.meta}
\usepackage[numbers]{natbib}

\usepackage[
colorlinks=true,
linkcolor=blue,
filecolor=blue,
urlcolor=red,
citecolor=magenta]{hyperref}
\usepackage[nameinlink]{cleveref}

\newcounter{counterConstant}

\def\cF{{\mathcal F}}

\def\mN{{\mathbb N}}

\def\mR{{\mathbb R}}
\def\mS{{\mathbb S}}

\def\mZ{{\mathbb Z}}

\def\sS{{\mathscr S}}

\def\geq{\geqslant}
\def\leq{\leqslant}

\def\1{{\mathbf{1}}}
\def\p{\partial}
\def\d{\text{\rm{d}}}
\def\e{\mathrm{e}}
\def\eps{\varepsilon}

\def\esssup{\mathop{\mathrm{ess\,sup}}}

\numberwithin{equation}{section}

\newtheorem{theorem}{Theorem}[section]
\newtheorem{lemma}[theorem]{Lemma}
\newtheorem{remark}[theorem]{Remark}

\usepackage{fancyhdr}
{}

\allowdisplaybreaks

\title{On the Schauder Estimates for Non-local Equations with Drift: The Supercritical Case}
\author{Yanfang Li and Guohuan Zhao}
\date{}

\address{College of Mathematics and Physics, Beijing University of Chemical Technology, Beijing,100029, China}
\email{liyanfang@buct.edu.cn}

\address{ State Key Laboratory of Mathematical Sciences, Academy of Mathematics and Systems Science, CAS, Beijing, 100190, China}
\email{gzhao@amss.ac.cn}

\thanks{The research of Yanfang Li is supported by the Fundamental Research Funds for the Central Universities (ZY2628). The research of Guohuan Zhao is supported by the National Key Research and Development Program of China (No. 2024YFA1013503) and the National Natural Science Foundation of China grants (No. 12271352).}

\begin{document}

\begin{abstract}
We establish a Schauder estimate for a nonlocal Cauchy problem with drift. The leading operator is the generator of a non-degenerate $\alpha$-stable process with $\alpha\in(0,1)$, and the drift is $\beta$-Hölder continuous with $\beta \in (1-\alpha,1)$. The proof relies on a refined conic Littlewood--Paley decomposition and a one-dimensional one-sided dyadic maximum principle.
\end{abstract}

\maketitle

\noindent \textbf{Keywords:} Schauder Estimates, Non-local Equations, Conic Littlewood--Paley decomposition, maximum principle

\noindent  {\bf AMS 2020 Mathematics Subject Classification:} 35B45, 45K05

\section{Introduction and main estimate}

The main purpose of this paper is to establish a Schauder estimate for the following non-local parabolic equation
\begin{equation}\label{eq:PDE}
    \p_t u+\lambda u-Lu-b\cdot\nabla u=h,
    \quad u(0)=f.
\end{equation}
Here  $b: [0,T] \times \mR^d \to \mR^d$, and $L$ is the infinitesimal generator of an $\alpha$-stable process $Z=(Z_t)_{t \geq 0}$ with $\alpha \in (0,1)$. More precisely,  \begin{equation*}
    L\varphi(x)
    :=
    \int_{\mR^d}
    \bigl(\varphi(x+z)-\varphi(x)\bigr)\nu(\d z), \quad \varphi\in C_b^1(\mR^d), 
\end{equation*}
where $\nu$ is the L\'evy measure of $Z$. Equation \eqref{eq:PDE} is the backward Kolmogorov equation associated with the SDE 
\begin{equation}\label{eq:SDE}
    \d X_t=b(t,X_t)\,\d t+\d Z_t,
    \qquad X_0=x.
\end{equation}

The $\alpha$-stable Lévy measure $\nu$ admits the polar decomposition
\[
    \nu(A)
    =
    \int_{\mS^{d-1}}\int_0^\infty
    \1_A(r\theta)\,\frac{\d r}{r^{1+\alpha}}\,\mu(\d\theta),
    \quad A\subset \mR^d\setminus\{0\}, 
\]
where $\mu$ is a finite non-negative Borel measure on $\mS^{d-1}$, called
the spectral measure of $Z$. 

\medskip

Thoughout this paper, we impose the non-degeneracy condition
\begin{equation}\label{eq:mu}
    \int_{\mS^{d-1}} |e\cdot\theta|^\alpha\,\mu(\d\theta)
    \geq \kappa>0,
    \qquad e\in\mS^{d-1}. 
    \tag{{\bf A}}
\end{equation}

Our main result is the following.

\begin{theorem}\label{thm:main}
Let $\alpha\in (0,1)$, $\beta\in(1-\alpha,1)$ and $s:=\alpha+\beta\in(1,2)$. Assume \eqref{eq:mu} holds. Suppose that 
\[
    \| b \|_{L^\infty_T C^\beta_x}, ~ \| h \|_{L^\infty_T C^\beta_x}, ~ \|f\|_{C^s}<\infty, 
\]
and $u$ is a smooth bounded solution of \eqref{eq:PDE}, then
\begin{equation}\label{eq:main-pre}
    [u]_{L_T^\infty \dot{C}_x^s}
    \leq
    C\left(
        [f]_{\dot{C}^s}
        +[h]_{L_T^\infty \dot{C}_x^\beta}
        +[b]_{L_T^\infty \dot{C}_x^\beta}
        \|\nabla u\|_{L^\infty_T L^\infty_x}
    \right), 
\end{equation}
where $C$ depends only on $d,\alpha,\beta,\kappa,\mu(\mS^{d-1})$. Consequently, for every
$\lambda>0$,
\begin{equation}\label{eq:main-final}
    \|u\|_{L_T^\infty C_x^s}
    \leq
    C\left(
        1+[b]_{L_T^\infty \dot{C}_x^\beta}^{s/(s-1)}
    \right)\|f\|_{C^s}
    +C\left(
        1+\lambda^{-1}
        +\lambda^{-1}[b]_{L_T^\infty \dot{C}_x^\beta}^{s/(s-1)}
    \right)\|h\|_{L_T^\infty C_x^\beta}.
\end{equation}
\end{theorem}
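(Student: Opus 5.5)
We prove \eqref{eq:main-pre} by localizing in frequency, and then obtain \eqref{eq:main-final} from it by interpolation. The difficulty is that $\alpha<1$. The drift $b\cdot\nabla$ is then of higher order than $L$, so it cannot be treated as a perturbation; it has to be absorbed by transport, that is, by working along characteristics. The paper's abstract points to two ingredients, a conic Littlewood--Paley decomposition and a one-sided dyadic maximum principle, and we use both.

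\emph{Step 1: decomposition.} Split the Fourier space into finitely many cones $\Gamma_k$ around directions $e_k\in\mS^{d-1}$ and into dyadic annuli. Let $\Delta_{j,k}$ denote the resulting blocks. Since $s\in(1,2)$, it suffices to bound $\|\Delta_{j,k}u\|_\infty\lesssim 2^{-js}(\cdots)$ for $j\geq 0$. On the frequency support of $\Delta_{j,k}$, condition \eqref{eq:mu} makes the symbol of $-L$ have real part comparable to $2^{j\alpha}$.

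\emph{Step 2: commutator.} Apply $\Delta_{j,k}$ to \eqref{eq:PDE} and freeze the drift at the local scale: write $b=S_{j}b+(b-S_{j}b)$. This gives
\[
\p_t u_{j}+\lambda u_j-Lu_j-S_jb\cdot\nabla u_j=h_j+R_j,
\]
where $R_j$ collects $\Delta_{j,k}\bigl((b-S_jb)\cdot\nabla u\bigr)$ and the commutator $[\Delta_{j,k},S_jb\cdot\nabla]u$. Bony's paraproduct estimates give
\[
\|R_j\|_\infty\lesssim 2^{-j\beta}[b]_{\dot C^\beta}\|\nabla u\|_\infty+(\text{sum over }|i-j|\le 2\text{ of } \|\Delta_i u\|\text{ terms}).
\]
The second kind of term, $\sum_{i}2^{j(1-\beta)}\ldots$, has to be controlled by $[u]_{\dot C^s}$ times a small factor, or else absorbed. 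This is where $\beta>1-\alpha$ enters: the gain $2^{-j\alpha}$ from dissipation beats the loss $2^{j(1-\beta)}$, since $s>1$.

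\emph{Step 3: maximum principle, the main obstacle.} We then need an $L^\infty$ bound for the localized equation of the form
\[
\|u_j(t)\|_\infty\le \e^{-c2^{j\alpha}t}\|f_j\|_\infty+C2^{-j\alpha}\sup\|h_j+R_j\|_\infty.
\]
The transport by $S_jb$, which is smooth at scale $2^{-j}$ but large in gradient, can move frequencies out of the annulus. The dissipation is only one-sided and nonlocal, so the standard Fourier-based maximum principle for band-limited functions fails. Our first approach is to pass to the Lagrangian flow $\dot X=S_jb(t,X)$ on time intervals of length $\sim 2^{-j\alpha}$. On such intervals the flow distorts frequencies by at most a factor $\e^{\|\nabla S_jb\|2^{-j\alpha}}\lesssim \e^{2^{j(1-\beta-\alpha)}}=O(1)$, again by $\beta>1-\alpha$. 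Along each cone direction $e_k$ we then use the one-dimensional one-sided dyadic maximum principle: at a maximum point $x_0$ of $u_j$, restricted to lines in direction $e_k$, we have $-Lu_j(x_0)\ge c2^{j\alpha}u_j(x_0)$. This is the step where we expect the real work, because it requires lower-bounding $-L$ at an extremum using only the portion of $\mu$ near $\pm e_k$ and the frequency localization of $u_j$ along that axis.

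\emph{Step 4: summation.} Multiply the bound from Step 3 by $2^{js}$ and take the supremum over $j,k$. Absorbing the $[u]_{\dot C^s}$ terms gives \eqref{eq:main-pre}. The low frequencies are harmless because only seminorms appear.

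\emph{Step 5: from \eqref{eq:main-pre} to \eqref{eq:main-final}.}
The maximum principle gives
\[
\|u\|_\infty\le\|f\|_\infty+\lambda^{-1}\|h\|_\infty.
\]
Interpolation gives
\[
\|\nabla u\|_\infty\le\eps[u]_{\dot C^s}+C\eps^{-1/(s-1)}\|u\|_\infty.
\]
Choose $\eps=(2C[b])^{-1}$ so that the $\eps[u]_{\dot C^s}$ term can be absorbed. The remaining term carries the factor $[b]\eps^{-1/(s-1)}\sim[b]^{s/(s-1)}$, which yields the stated powers.
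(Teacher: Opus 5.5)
There is a genuine gap in Step 3, at exactly the point you flag as the real work. Your mechanism is to restrict $u_j$ to lines in the cone direction $e_k$ and to bound $-Lu_j(x_0)$ from below by the part of $\mu$ near $\pm e_k$. This cannot work under \eqref{eq:mu}. Non-degeneracy controls only the $\mu$-mass of large sets $\{\theta:|\theta\cdot e|\geq 2\delta\}$, never the mass of a neighbourhood of a prescribed direction. For instance, take the cylindrical measure in $d=2$, with unit masses at $\pm\eps_1,\pm\eps_2$. A decomposition into narrow cones contains one with axis near $(\eps_1+\eps_2)/\sqrt2$, and $\mu$ has no mass near that axis.

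The missing idea is to decouple the physical directions from the frequency cone. Suppose $\widehat v$ is supported in $\{c_02^j\leq|\xi|\leq C_02^j,\ \operatorname{dist}(\xi/|\xi|,\pm e_m)<\delta\}$. Then for \emph{every} $\theta$ with $|\theta\cdot e_m|\geq 2\delta$ one has $|\xi\cdot\theta|\geq\delta|\xi|$ on that support. By Lemma~\ref{lem:support}, $r\mapsto v(x_0+r\theta)$ then has one-dimensional spectrum in $\{c_0\delta 2^j\leq|\eta|\leq C_02^j\}$, away from the origin, so Lemma~\ref{lem:1d-dmp} applies along each such line. Condition \eqref{eq:mu} enters exactly to show that this large set of directions has $\mu$-mass at least $\kappa/2$ once $(2\delta)^\alpha M_\mu\leq\kappa/2$. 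All remaining directions contribute nonnegatively because $x_0$ is a global maximum. This is Lemma~\ref{lem:angular-dmp}, with constant depending only on $d,\alpha,\kappa,M_\mu$.

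Your Lagrangian-flow device is both unnecessary and unworkable as stated. Since $u_j$ is frequency-localized at every time by definition, transport cannot push it out of the annulus. What matters is that $\nabla u_j=0$ at a space-time maximum, so the drift term drops out there for any $b$. The whole effect of $b$ then sits in the commutator $[b\cdot\nabla,P_{j,m}]u(x)=\int K_{j,m}(y)\bigl(b(x)-b(x-y)\bigr)\cdot\nabla u(x-y)\,\d y$. By \eqref{eq:kernel} this is bounded by $C2^{-j\beta}[b]_{\dot C^\beta}\|\nabla u\|_{L^\infty}$, with no splitting of $b$ needed. By contrast, $u_j\circ X_t$ is no longer band-limited, so no band-limited maximum principle applies to it. Your distortion factor is really $\exp(C[b]_{\dot C^\beta}2^{j(1-s)})$, which is bounded only for $j\geq0$ and is exponential in $[b]$. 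That is incompatible with \eqref{eq:main-pre}, whose constant does not depend on $b$.

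For the same reason the absorption in Steps 2 and 4 does not close: the coefficient $C[b]_{\dot C^\beta}2^{j(1-s)}$ in front of $[u]_{\dot C^s}$ is small only for large $j$ or small $[b]$. In the paper, $\beta>1-\alpha$ is needed only to make $s>1$, so that $\|\nabla u\|_{L^\infty}$ is a lower-order quantity that can be interpolated in Step 5. Nor are low frequencies harmless. The seminorm $[u]_{\dot C^s}$ involves every $j\in\mZ$, and the crude bound $2^{js}\|\dot\Delta_j u\|_{L^\infty}\lesssim 2^{j(s-1)}\|\nabla u\|_{L^\infty}$ for $j<0$ leaves a term $\|\nabla u\|_{L^\infty}$ not multiplied by $[b]$. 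The paper avoids this because Lemma~\ref{lem:1d-dmp} is scale-invariant, so the dyadic argument runs uniformly in $j\in\mZ$. Your Step 5 matches the paper.
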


\begin{remark}
\begin{enumerate}
    \item The above theorem is an a priori estimate. It is stated for smooth
    bounded solutions in order to focus on the proof of the estimate. More
    general versions can be obtained by mollifying the data and the solution,
    proving the estimate uniformly, and then passing to the limit.
    \item The assumption that $L$ is translation-invariant and exactly
    stable is made only to streamline the presentation. The argument can
    also be extended to time- and space-dependent L\'evy-type operators
    of the form
    \[
        L_t\varphi(x)
        :=
        \int_{\mR^d}
        \bigl(\varphi(x+z)-\varphi(x)\bigr)\nu_{t,x}(\d z),
    \]
    where the jump measure $\nu_{t,x}$ is bounded between the L\'evy measures of two non-degenerate $\alpha$-stable
    processes and has sufficient regularity assumptions in $x$. 
\end{enumerate}
\end{remark}

Regularity estimates for non-local operators with irregular drift play an
important role in the study of nonlinear integro-differential equations and stochastic differential equations driven by L\'evy processes. In the model case of the fractional Laplacian, the interplay between the regularizing effect of non-local diffusion and the drift has been studied through Schauder, H\"older, and related regularity estimates for integro-differential equations; see for instance, 
\cite{silvestre2006holder,caffarelli2010drift,silvestre2012differentiability,chen2014holder}. Regularity theories have also been developed for non-local equations with non-translation-invariant,  anisotropic, or singular jump kernels; see, among others, \cite{kassmann2009priori,dong2018dini,dyda2020regularity,DRSV22nonsymmetric,FR24schauder}.

For equation \eqref{eq:PDE}, the regime $\alpha\in(0,1)$ is supercritical because the first-order drift $b\cdot\nabla$ has higher differential order than the non-local diffusion $L$. For the fractional Laplacian
$L=\Delta^{\alpha/2}$, Silvestre's work
\cite{silvestre2012differentiability} identified the condition
$\alpha+\beta>1$ as the natural threshold for obtaining differentiability of the solution. This threshold also appears in the study of  SDE \eqref{eq:SDE} with irregular drift $b$. In particular, Priola proved pathwise
uniqueness for SDEs of the form \eqref{eq:SDE} with H\"older drifts
\cite{priola2012pathwise,priola2015stochastic}, using Schauder estimates for
the associated Kolmogorov equation \eqref{eq:PDE} together with Zvonkin's transformation.
Such estimates are also useful in the study of stochastic flows property, path-by-path uniqueness for random ODEs, and convergence rates of the Euler--Maruyama scheme for \eqref{eq:SDE}, where one needs sufficiently regular solutions of the Kolmogorov equation to control the
transformed dynamics (see for instance \cite{priola2018davie} and \cite{LZ24euler}). 

However, available H\"older-space Schauder estimates for \eqref{eq:PDE} in supercritical case are more restricted beyond the isotropic fractional Laplacian. de Raynal, Menozzi and Priola 
\cite{de2020schauder} treated general stable-type generators
satisfying suitable heat-kernel smoothing assumptions. For general
non-degenerate symmetric stable operators, including the cylindrical stable
operator, the verification in
\cite{de2020schauder} requires $\beta<\alpha$; together with
$\alpha+\beta>1$, this imposes the restriction $\alpha>1/2$. Other partial
results were obtained, for instance, in
\cite{chen2018stochastic,HW24schauder,LZ22nonlocal}, under additional
restrictions on the range of $\alpha$ or on the structure of the operator. Related estimates in Besov or fractional Sobolev spaces were obtained in
\cite{chen2021supercritical,zhao2021regularity}. We also mention that a closely related estimate was stated in \cite[Theorem 4.8]{SX23weak}. Its proof uses the frequency-localized maximum principle of \cite{WZ11frequency}, but whose direct
form does not apply to the general anisotropic stable measures considered
there and here. The angular maximum principle developed below will is designed to address this point.

\medskip

In conclusion, these restrictions leave a gap in the
H\"older-space Schauder theory for general
stable operators in the whole supercritical range $\alpha\in(0,1)$. The purpose of this paper is to prove such an estimate for the parabolic equation \eqref{eq:PDE}, where
the leading operator is generated by an arbitrary non-degenerate $\alpha$-stable L\'evy measure, and $b\in C^\beta$ with $\beta>1-\alpha$. The proof is based on a fine conic Littlewood--Paley decomposition. The main observation is that the stable
L\'evy measure yields an angular dyadic maximum principle, which plays
the role of the usual dyadic maximum principle for the isotropic fractional
Laplacian. This point is especially useful for anisotropic or singular jump
measures, where the standard isotropic maximum-principle argument is no longer
directly available, and is the reason that the same method can be extended to
more general L\'evy operators as indicated in the remark above.

\medskip

We close this section by mentioning some notations and recalling very basic facts from standard Littlewood-Paley theory: Let $\sS(\mR^d)$ be the Schwartz space of all rapidly decreasing functions, and $\sS'(\mR^d)$ the dual space of $\sS(\mR^d)$ 
called Schwartz generalized function (or tempered distribution) space. For any $f\in \sS(\mR^d)$ define its Fourier transform by
\[
\cF(f)(\xi)=\widehat{f}(\xi)=\int_{\mR^d} \e^{-i x \cdot \xi} f(x)\, \d x. 
\]

Let $\chi:\mR^d\to[0,1]$ be a smooth radial function so that $\chi|_{B_{3/4}}=1$ and $\chi|_{B_1^c}=0$. Define
$$
\varphi(\xi):=\chi(\xi)-\chi(2\xi).
$$
For $j\in\mZ$, define the homogeneous dyadic block by
\[
\dot{\Delta}_j f
:=
\cF^{-1}\left(
    \varphi(2^{-j}\,\cdot)\widehat f
\right).
\]

Let $\mathcal P$ denote the space of polynomials on $\mR^d$.  Homogeneous Besov spaces are naturally defined on the quotient
$\sS'(\mR^d)/\mathcal P$. For $s\in\mR$, set
\[
\dot B^s_{\infty,\infty}(\mR^d)
:=
\Big\{
f\in \sS'(\mR^d)/\mathcal P:
\|f\|_{\dot B^s_{\infty,\infty}}:=
\sup_{j\in\mZ}
2^{js}\|\dot{\Delta}_j f\|_{L^\infty}<\infty\Big\}. 
\]

We next recall the homogeneous H\"older spaces. Let $0<s\notin \mN$ and write $s=k+\gamma$ with $k \in \mN$ and $\gamma\in(0,1)$. Denote by $\mathcal P_k$ the space of polynomials on $\mR^d$ of degree at
most $k$. Define 
\[
\dot C^s(\mR^d)
:=
\Big\{
f\in C^k_{\mathrm{loc}}(\mR^d):
[f]_{\dot C^s}
:= 
\sup_{x\neq y}
\frac{|\nabla^k f(x)-\nabla^k f(y)|}
{|x-y|^\gamma}
<\infty
\Big\}\big/\mathcal P_k.
\]
It is well-known that for every
positive non-integer $s$, 
\[
    \dot B^s_{\infty,\infty}(\mR^d)=\dot C^s(\mR^d),
    \qquad
    \|f\|_{\dot B^s_{\infty,\infty}}
    \asymp [f]_{\dot C^s}.
\]
See, for instance, \cite{bahouri2011fourier}.

The inhomogeneous H\"older space is 
\[
    C^s(\mR^d)
    :=
    \Big\{
    f\in C^k(\mR^d):
    \|f\|_{C^s}
    :=
    \sum_{0 \leq i \leq k}\|\nabla^i f\|_{L^\infty}
    +[f]_{\dot C^s}
    <\infty
    \Big\}.
\]
For a function $f=f(t,x)$ on $[0,T]\times\mR^d$, we set
\[
    \|f\|_{L_T^\infty C_x^s}
    :=
    \esssup_{t\in[0,T]}\|f(t,\cdot)\|_{C^s},
    \qquad
    [f]_{L_T^\infty\dot C_x^s}
    :=
    \esssup_{t\in[0,T]}[f(t,\cdot)]_{\dot C^s}.
\]

\section{Maximum principles}

\subsection{One-dimensional dyadic maximum principle}

We begin with a one-dimensional one-sided dyadic maximum principle.

\begin{lemma}\label{lem:1d-dmp}
Let $0<\alpha<1$ and $0<a<A<\infty$. There exists
$c=c(\alpha,a,A)>0$ such that the following holds. Let $g:\mR\to\mR$ be
bounded and real-valued, and suppose
\[
    \operatorname{supp} \widehat g
    \subset
    \{\eta\in\mR: a2^j\leq |\eta|\leq A2^j\}
\]
for any $j\in \mZ$. If
\[
    g(0)=\|g\|_{L^\infty}>0,
\]
then
\begin{equation*}
    \int_0^\infty \frac{g(0)-g(r)}{r^{1+\alpha}}\,\d r
    \geq c2^{j\alpha} g(0).
\end{equation*}
\end{lemma}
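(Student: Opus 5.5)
By scaling we first reduce to $j=0$. If $g$ satisfies the hypothesis at level $j$, then $g_j(x):=g(2^{-j}x)$ has $\widehat{g_j}$ supported in $\{a\le|\eta|\le A\}$. The substitution $r=2^{-j}\rho$ gives
\[
\int_0^\infty \frac{g(0)-g(r)}{r^{1+\alpha}}\,\d r=2^{j\alpha}\int_0^\infty \frac{g_j(0)-g_j(\rho)}{\rho^{1+\alpha}}\,\d \rho .
\]
So it suffices to prove the bound with $j=0$. By homogeneity we may also normalize $g(0)=\|g\|_{L^\infty}=1$.

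Since the integrand is nonnegative, we keep only the part $r\in[1,R]$ for a fixed $R$ to be chosen. This gives
\[
I:=\int_0^\infty \frac{1-g(r)}{r^{1+\alpha}}\,\d r\ \ge\ R^{-1-\alpha}\int_1^R (1-g(r))\,\d r .
\]
It therefore suffices to show that $\int_1^R(1-g)\,\d r\ge c_0$ for some $c_0=c_0(a,A)>0$, uniformly over the class
\[
\mathcal K:=\{g\ \text{real, bounded},\ \operatorname{supp}\widehat g\subset\{a\le|\eta|\le A\},\ g(0)=\|g\|_\infty=1\}.
\]

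We argue by compactness and contradiction. Suppose $g_n\in\mathcal K$ satisfy $\int_1^R(1-g_n)\to0$. Fix $\psi\in\sS(\mR)$ with $\widehat\psi=1$ on $\{a\le|\eta|\le A\}$; then $g_n=g_n*\psi$. Hence all derivatives of $g_n$ are bounded uniformly, by $\|\psi^{(k)}\|_{L^1}$ (Bernstein). By Arzelà--Ascoli, a subsequence converges locally uniformly, and as tempered distributions, to some $g$. The limit satisfies $g(0)=1$, $\|g\|_\infty\le1$, and $\operatorname{supp}\widehat g\subset\{a\le|\eta|\le A\}$, since the support condition is preserved under distributional limits. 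Moreover $\int_1^R(1-g)=0$, so $g\equiv1$ on $[1,R]$ because $1-g\ge0$ is continuous.

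The key point is that $g$ extends to an entire function of exponential type $A$ (Paley--Wiener--Schwartz), bounded on $\mR$. An entire function equal to $1$ on an interval is identically $1$. But then $\widehat g=2\pi\delta_0$, whose support is not contained in $\{|\eta|\ge a\}$. This is a contradiction. Hence $c_0>0$ exists, and the lemma follows with any fixed $R>1$ (say $R=2$), giving $c=R^{-1-\alpha}c_0$.

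The main obstacle is making the compactness step rigorous. Two points need care: that the Fourier support constraint survives the limit, which requires the uniform boundedness to yield $\sS'$ convergence, and that the limit, being an entire function vanishing identically minus $1$ on an interval, contradicts the annular spectral gap. If this proves delicate, a more quantitative alternative is to estimate $\int_1^R(1-g)$ using $\widehat{g}$ vanishing near $0$. Since $\widehat g(0)=0$ in the averaged sense, $\frac1{R}\int_0^R g\to0$, so $\int_0^R g=O(1)$ uniformly by testing against a smooth cutoff whose transform is concentrated in $|\eta|<a$. This forces $\int_1^R(1-g)\ge R-1-O(1)>0$ for $R$ large, and would give explicit constants.
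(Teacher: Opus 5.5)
Your proof is correct and follows essentially the paper's route: you reduce to $j=0$ by scaling, extract a locally uniformly convergent subsequence from Bernstein bounds, and use analyticity of the band-limited limit to force $g\equiv 1$, which contradicts the spectral gap at the origin. The only difference is cosmetic: you truncate to $r\in[1,R]$ first, so local uniform convergence replaces the paper's Fatou step on $[0,R]$ (take $\widehat\psi=1$ on a neighbourhood of the annulus, not just on it, so that $g_n=g_n*\psi$ holds).
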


\begin{proof}
By scaling and normalization it suffices to prove the claim for $j=0$ and
$g(0)=\|g\|_{L^\infty}=1$. Suppose the claim is false. Then there exists a
sequence $g_n$ such that
\[
    \operatorname{supp}\widehat g_n\subset\{a\leq |\eta|\leq A\},
    \quad
    g_n(0)=\|g_n\|_{L^\infty}=1,
\]
but
\[
    \int_0^\infty \frac{1-g_n(r)}{r^{1+\alpha}}\,\d r\to0.
\]
Since the Fourier supports are contained in a fixed compact set and
$\|g_n\|_{L^\infty}\leq1$, Bernstein's inequalities imply uniform bounds for
all derivatives on compact sets. Passing to a subsequence, $g_n\to g$ locally
uniformly, with
\[
    g(0)=\|g\|_{L^\infty}=1,
    \quad
    \operatorname{supp}\widehat g\subset\{a\leq |\eta|\leq A\}.
\]
Since $1-g_n(r)\geq0$, Fatou's lemma gives, for every $R>0$,
\[
    \int_0^R \frac{1-g(r)}{r^{1+\alpha}}\,\d r=0.
\]
Hence $g(r)=1$ for every $r>0$. The function $g$ is band-limited, hence real
analytic; therefore $g\equiv1$ on $\mR$. This contradicts the fact that the
Fourier support of $g$ is disjoint from the origin. The lemma follows.
\end{proof}

Next we recall a standard support property for restrictions to lines.

\begin{lemma}\label{lem:support}
Let $v$ be a smooth tempered function on $\mR^d$ such that
$\operatorname{supp}\widehat v\subset K$, where $K\subset\mR^d$ is compact. Fix
$x_0\in\mR^d$ and $\theta\in\mS^{d-1}$, and define
\[
    g_\theta(r):=v(x_0+r\theta),
    \quad r\in\mR.
\]
Then, as a distribution on $\mR$,
\[
    \operatorname{supp} \widehat{g_\theta}
    \subset
    \{\xi\cdot\theta:\xi\in K\}.
\]
\end{lemma}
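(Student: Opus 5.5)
The plan is to compute $\widehat{g_\theta}$ by duality and reduce the claim to a support statement for a pushforward of $\widehat v$. Since $v$ is tempered and smooth with $\widehat v$ compactly supported, $\widehat v$ is a distribution with compact support $K$. By Paley--Wiener--Schwartz, $v$ is the restriction to $\mR^d$ of an entire function of exponential type and polynomial growth, and we may write
\[
    v(x)=(2\pi)^{-d}\langle \widehat v, \e^{i x\cdot(\cdot)}\rangle .
\]
Hence $g_\theta(r)=(2\pi)^{-d}\langle \widehat v,\e^{i x_0\cdot\xi}\e^{ir\,\xi\cdot\theta}\rangle_\xi$. In particular $g_\theta$ is smooth with polynomial growth, so it is tempered on $\mR$.

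Fix a test function $\psi\in\sS(\mR)$ and pair it with $\widehat{g_\theta}$:
\[
    \langle \widehat{g_\theta},\psi\rangle=\langle g_\theta,\widehat\psi\rangle=\int_\mR g_\theta(r)\widehat\psi(r)\,\d r .
\]
Insert the representation of $g_\theta$ and exchange the $r$-integral with the pairing against $\widehat v$. This exchange is justified because $\widehat v$ has compact support and therefore finite order, so the pairing is continuous in $C^N$ on a neighbourhood of $K$, and the Riemann sums in $r$ converge in that topology thanks to the rapid decay of $\widehat\psi$. After Fourier inversion in one variable we get
\[
    \langle \widehat{g_\theta},\psi\rangle=c\,\bigl\langle \widehat v,\ \e^{ix_0\cdot\xi}\,\psi(-\xi\cdot\theta)\bigr\rangle_\xi ,
\]
where $c$ is a harmless normalising constant. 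The sign convention only affects the result up to the reflection $\xi\cdot\theta\mapsto-\xi\cdot\theta$. If one prefers to avoid tracking this, note that $g_\theta$ is real-valued whenever $v$ is, so $\widehat{g_\theta}$ has support symmetric under reflection, and one may replace $K$ by $K\cup(-K)$. In the applications in this paper $K$ is an annulus-type set that is already symmetric, so this causes no harm. With the careful computation the reflection in fact cancels and gives exactly $\{\xi\cdot\theta:\xi\in K\}$.

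Now suppose $\psi$ vanishes on a neighbourhood of the compact set $\pi_\theta(K)=\{\xi\cdot\theta:\xi\in K\}$. Then the function $\xi\mapsto \psi(\xi\cdot\theta)$ vanishes on a neighbourhood of $K$. Consequently $\langle \widehat v,\cdot\rangle$ applied to it is zero, since a distribution annihilates every test function vanishing near its support. Therefore $\langle\widehat{g_\theta},\psi\rangle=0$, which is exactly the claimed support inclusion.

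The only delicate step is justifying the interchange of the $r$-integration with the distributional pairing. I would handle it with the finite-order continuity estimate
\[
    |\langle\widehat v,\phi\rangle|\le C\sum_{|\gamma|\le N}\sup_{K_\epsilon}|\p^\gamma\phi| .
\]
Alternatively, one can approximate $\widehat v$ by $\widehat v*\rho_\epsilon$, whose supports lie in $K_\epsilon$, and pass to the limit in $\mathcal S'(\mR)$.
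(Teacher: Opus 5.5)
Your argument is correct and is essentially the paper's proof: you pair $\widehat{g_\theta}$ with a test function and move the $r$-integral inside the pairing with $\widehat v$. One-dimensional Fourier inversion then gives $(2\pi)^{1-d}\langle \widehat v, \e^{ix_0\cdot\xi}\psi(\xi\cdot\theta)\rangle$, which vanishes when $\psi$ vanishes near $\{\xi\cdot\theta:\xi\in K\}$; your finite-order justification of the interchange is a welcome addition that the paper leaves implicit.

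The displayed $\psi(-\xi\cdot\theta)$ is a slip. With $\widehat\psi(r)=\int_{\mR}\e^{-ir\eta}\psi(\eta)\,\d\eta$, the identity $\int_{\mR}\e^{irs}\widehat\psi(r)\,\d r=2\pi\psi(s)$ produces no reflection. So drop the detour through real-valuedness and $K\cup(-K)$: it is unnecessary, and by itself would only prove the weaker symmetric statement, not the lemma for a general compact $K$.
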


\begin{proof}
Since $\widehat v$ is supported in the compact set $K$, the inverse Fourier formula gives
\[
\begin{aligned}
    g_\theta(r)=(2\pi)^{-d}
    \left\langle
        \widehat v(\xi),
        e^{ix_0\cdot\xi}e^{ir\xi\cdot\theta}
    \right\rangle.
\end{aligned}
\]
Let $\varphi\in C_c^\infty(\mR)$ and define its one-dimensional Fourier
transform by $\widehat\varphi(r):=\int_{\mR}e^{-ir\eta}\varphi(\eta)\,\d\eta$. By the definition of the Fourier transform of a distribution, we have
\begin{equation}\label{eq:int_g_phi}
\begin{aligned}
    \langle \widehat{g_\theta},\varphi\rangle
    &=\langle g_\theta,\widehat\varphi\rangle=(2\pi)^{-d}
    \left\langle
        \widehat v(\xi),
        e^{ix_0\cdot\xi}
        \int_{\mR}e^{ir\xi\cdot\theta}
        \widehat\varphi(r)\,\d r
    \right\rangle \\
    &=(2\pi)^{1-d}
    \left\langle
        \widehat v(\xi),
        e^{ix_0\cdot\xi}\varphi(\xi\cdot\theta)
    \right\rangle.
\end{aligned}
\end{equation}
In the last equality we used the one-dimensional inversion identity
\[
    \int_{\mR}e^{irs}\widehat\varphi(r)\,\d r
    =2\pi\varphi(s).
\]

Now suppose that
\[
    \operatorname{supp}\varphi\cap \{\xi \cdot \theta: \xi\in K \}=\varnothing.
\]
Since $\operatorname{supp}\widehat v\subset K$, and $\xi\mapsto\varphi(\xi\cdot\theta)$ vanishes on a neighbourhood of
$K$, the last integration in \eqref{eq:int_g_phi} is zero. Hence, one sees that 
\[
    \operatorname{supp}\widehat{g_\theta}
    \subset \{\xi\cdot\theta:\xi\in K\}.
\]
\end{proof}

\subsection{Conic decomposition and Angular dyadic maximum principle}

Let $M_\mu:=\mu(\mS^{d-1})$. Choose $\delta\in(0,1)$ so small that
\begin{equation}\label{eq:delta}
    (2\delta)^\alpha M_\mu\leq \frac{\kappa}{2}.
\end{equation}
For $e\in\mS^{d-1}$, set
\[
    E(e):=\{\theta\in\mS^{d-1}: |e\cdot\theta|\geq 2\delta\}.
\]
By \eqref{eq:mu} and \eqref{eq:delta}, we have 
\[
\begin{aligned}
    \kappa \leq \int_{\mS^{d-1}} |e\cdot\theta|^\alpha\,\mu(\d\theta)  \leq (2\delta)^\alpha M_\mu + \mu(E(e))\leq \frac{\kappa}{2}+\mu(E(e)), 
\end{aligned}
\]
which yields that 
\begin{equation}\label{eq:E-lower}
    \mu(E(e))\geq \frac{\kappa}{2},
    \quad e\in\mS^{d-1}.
\end{equation}
Choose finitely many points $e_m\in\mS^{d-1}$, $1\leq m\leq N$, and smooth
real-valued even functions $q_m$, $1\leq m\leq N$, on $\mS^{d-1}$ such that
\[
    \bigcup_{m=1}^N E(e_m) = \mS^{d-1}; 
    \quad \sum_{m=1}^N q_m(\omega)=1,
    \quad \omega\in\mS^{d-1}; 
\]
and
\[
    \operatorname{supp} q_m
    \subset
    \{\omega\in\mS^{d-1}: \operatorname{dist}(\omega,\pm e_m)<\delta\}.
\]
Then, for every $\omega\in\operatorname{supp} q_m$ and $\theta\in E(e_m)$,
\begin{equation}\label{eq:cone-pro}
    |\omega\cdot\theta|\geq\delta.
\end{equation}

Recall that $(\dot{\Delta}_j)_{j\in\mZ}$ is the standard  Littlewood--Paley family with
\begin{equation}\label{eq:support}
    \operatorname{supp} \widehat{\dot{\Delta}_j h}
    \subset
    \{\xi\in\mR^d: c_0 2^j\leq |\xi|\leq C_0 2^j\},
    \quad j\in \mZ.
\end{equation}
Extend $q_m$ to $\mR^d\setminus\{0\}$ homogeneously of degree zero, and define the conic dyadic projectors
\begin{equation*}
    P_{j,m}h:=q_m(D)\dot{\Delta}_j h.
\end{equation*}
Since the $q_m$ are real and even, $P_{j,m}$ maps real-valued functions to
real-valued functions. Moreover,
\begin{equation}\label{eq:sum-Pjm}
    \dot{\Delta}_j h=\sum_{m=1}^N P_{j,m}h.
\end{equation}

Figure~\ref{fig:conic} illustrates the conic dyadic
decomposition in the 2D cylindrical case.
\begin{figure}[htbp]
    \centering
    \begin{tikzpicture}[
        scale=1.05,
        font=\small,
        line cap=round,
        line join=round,
        >=stealth
    ]
    \definecolor{angularorange}{HTML}{D55E00}
    \definecolor{angularblue}{HTML}{0072B2}
    \definecolor{angulargreen}{HTML}{009E73}
    \definecolor{angularpurple}{HTML}{9C4F96}
    \tikzset{
        qone/.style={
            pattern={Lines[angle=25,distance=3pt,line width=.38pt]},
            pattern color=angularorange,
            draw=angularorange!85!black,
            line width=.35pt
        },
        qtwo/.style={
            pattern={Lines[angle=70,distance=3pt,line width=.38pt]},
            pattern color=angularblue,
            draw=angularblue!85!black,
            line width=.35pt
        },
        qthree/.style={
            pattern={Lines[angle=115,distance=3pt,line width=.38pt]},
            pattern color=angulargreen,
            draw=angulargreen!85!black,
            line width=.35pt
        },
        qfour/.style={
            pattern={Lines[angle=160,distance=3pt,line width=.38pt]},
            pattern color=angularpurple,
            draw=angularpurple!85!black,
            line width=.35pt
        },
        qlabel/.style={
            font=\scriptsize,
            fill=white,
            fill opacity=.86,
            text opacity=1,
            inner sep=1pt
        }
    }

    \begin{scope}[shift={(-3.45,0)}]
        \draw[->,gray!70] (-1.55,0)--(1.72,0) node[right,black] {$x_1$};
        \draw[->,gray!70] (0,-1.55)--(0,1.72) node[above,black] {$x_2$};
        \draw[black!70,line width=.55pt] (0,0) circle (1.18);
        \foreach \angle in {0,90,180,270}{
            \draw[->,black!45,line width=.45pt] (0,0)--(\angle:1.12);
            \fill[black] (\angle:1.18) circle (2.15pt);
        }
        \node[anchor=south west] at (0:1.18) {$\eps_1$};
        \node[anchor=south east] at (180:1.18) {$-\eps_1$};
        \node[anchor=south west] at (90:1.18) {$\eps_2$};
        \node[anchor=north east] at (270:1.18) {$-\eps_2$};
        \node[font=\small] at (0,-2.02) {support of the cylindrical measure:};
        \node[font=\scriptsize] at (0,-2.4)
            {$\operatorname{supp}\mu=\{\pm \eps_1,\pm \eps_2\}$};
    \end{scope}

    \begin{scope}[shift={(2.15,0)}]
        \def\innerradius{1.02}
        \def\outerradius{1.86}
        \draw[->,gray!65] (-2.12,0)--(2.28,0) node[right,black] {$\xi_1$};
        \draw[->,gray!65] (0,-2.12)--(0,2.28) node[above,black] {$\xi_2$};

        \path[qone]
            (-28:\innerradius) arc[start angle=-28,end angle=28,radius=\innerradius]
            -- (28:\outerradius) arc[start angle=28,end angle=-28,radius=\outerradius]
            -- cycle;
        \path[qone]
            (152:\innerradius) arc[start angle=152,end angle=208,radius=\innerradius]
            -- (208:\outerradius) arc[start angle=208,end angle=152,radius=\outerradius]
            -- cycle;

        \path[qtwo]
            (17:\innerradius) arc[start angle=17,end angle=73,radius=\innerradius]
            -- (73:\outerradius) arc[start angle=73,end angle=17,radius=\outerradius]
            -- cycle;
        \path[qtwo]
            (197:\innerradius) arc[start angle=197,end angle=253,radius=\innerradius]
            -- (253:\outerradius) arc[start angle=253,end angle=197,radius=\outerradius]
            -- cycle;

        \path[qthree]
            (62:\innerradius) arc[start angle=62,end angle=118,radius=\innerradius]
            -- (118:\outerradius) arc[start angle=118,end angle=62,radius=\outerradius]
            -- cycle;
        \path[qthree]
            (242:\innerradius) arc[start angle=242,end angle=298,radius=\innerradius]
            -- (298:\outerradius) arc[start angle=298,end angle=242,radius=\outerradius]
            -- cycle;

        \path[qfour]
            (107:\innerradius) arc[start angle=107,end angle=163,radius=\innerradius]
            -- (163:\outerradius) arc[start angle=163,end angle=107,radius=\outerradius]
            -- cycle;
        \path[qfour]
            (287:\innerradius) arc[start angle=287,end angle=343,radius=\innerradius]
            -- (343:\outerradius) arc[start angle=343,end angle=287,radius=\outerradius]
            -- cycle;

        \draw[black!75,line width=.55pt] (0,0) circle (\innerradius);
        \draw[black!75,line width=.55pt] (0,0) circle (\outerradius);

        \draw[->,angularorange!85!black,line width=.65pt]
            (0,0)--(0:.78)
            node[anchor=south west,font=\scriptsize,inner sep=.8pt] {$e_1$};
        \draw[->,angularblue!85!black,line width=.65pt]
            (0,0)--(45:.78)
            node[anchor=south west,font=\scriptsize,inner sep=.8pt] {$e_2$};
        \draw[->,angulargreen!80!black,line width=.65pt]
            (0,0)--(90:.78)
            node[anchor=south east,font=\scriptsize,inner sep=.8pt] {$e_3$};
        \draw[->,angularpurple!85!black,line width=.65pt]
            (0,0)--(135:.78)
            node[anchor=south east,font=\scriptsize,inner sep=.8pt] {$e_4$};

        \node[qlabel,text=angularorange!80!black] at (0:1.45) {$q_1$};
        \node[qlabel,text=angularorange!80!black] at (180:1.45) {$q_1$};
        \node[qlabel,text=angularblue!85!black] at (45:1.45) {$q_2$};
        \node[qlabel,text=angularblue!85!black] at (225:1.45) {$q_2$};
        \node[qlabel,text=angulargreen!80!black] at (90:1.45) {$q_3$};
        \node[qlabel,text=angulargreen!80!black] at (270:1.45) {$q_3$};
        \node[qlabel,text=angularpurple!85!black] at (135:1.45) {$q_4$};
        \node[qlabel,text=angularpurple!85!black] at (315:1.45) {$q_4$};

        \draw[black!60,line width=.4pt]
            (28:\outerradius)--++(.30,.25)
            node[right,black,font=\scriptsize,fill=white,inner sep=1pt]
            {$C_0 2^j$};
        \draw[black!60,line width=.4pt]
            (-28:\innerradius)--++(.32,-.26)
            node[right,black,font=\scriptsize,fill=white,inner sep=1pt]
            {$c_0 2^j$};
        \node[font=\small] at (0,-2.48) {angular decomposition in $\xi$-space:};
        \node[font=\scriptsize,align=center] at (0,-3)
            {same-colored opposite sectors correspond to one $q_m$};
    \end{scope}
    \end{tikzpicture}
    \caption{
    The black dots show the four directions in
    $\operatorname{supp}\mu=\{\pm \eps_1,\pm \eps_2\}$. Let
    $A_j:=\{\xi:c_0 2^j\leq |\xi|\leq C_0 2^j\}$. For each $m$, the
    colored arrow in the central disk indicates the representative direction
    $e_m$, while the two opposite sectors with the same color and hatching show
    $A_j\cap\{\xi:\xi/|\xi|\in\operatorname{supp}q_m\}$. The Fourier support
    of $P_{j,m}h$ is contained in this set. Neighboring sectors overlap
    because the $q_m$ form a smooth partition of unity. The two sectors are
    centered at $\pm e_m$ and are opposite because $q_m$ is even.}
    \label{fig:conic}
\end{figure}

The following lemma is inspired by \cite[Lemma 3.4]{WZ11frequency} and provides an angular analogue. 
\begin{lemma}[Angular dyadic maximum principle]\label{lem:angular-dmp}
Let $w \in \sS'(\mR^d)$ and $v=P_{j,m}w$. Suppose $v$ attains a positive maximum at
$x_0$, namely
\[
    v(x_0)=\|v\|_{L^\infty}>0.
\]
Then
\begin{equation*}
    -Lv(x_0)\geq c2^{\alpha j}v(x_0),
\end{equation*}
where $c>0$ depends only on
$d,\alpha,\kappa,M_\mu$ and on the fixed cutoffs.
\end{lemma}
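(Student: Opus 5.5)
Write $-Lv(x_0)$ in polar coordinates and use that $x_0$ is a global maximum. By the polar decomposition of $\nu$,
\[
    -Lv(x_0)=\int_{\mS^{d-1}}\int_0^\infty \frac{v(x_0)-v(x_0+r\theta)}{r^{1+\alpha}}\,\d r\,\mu(\d\theta).
\]
Since $v(x_0)=\|v\|_{L^\infty}$, every inner integrand is nonnegative, so we may discard all directions outside $E(e_m)$:
\[
    -Lv(x_0)\geq \int_{E(e_m)}\int_0^\infty \frac{v(x_0)-v(x_0+r\theta)}{r^{1+\alpha}}\,\d r\,\mu(\d\theta).
\]
By \eqref{eq:E-lower}, $\mu(E(e_m))\geq\kappa/2$. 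It therefore suffices to show that, for each fixed $\theta\in E(e_m)$, the inner radial integral is at least $c\,2^{\alpha j}v(x_0)$ with $c$ uniform in $\theta$.

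For fixed $\theta\in E(e_m)$ set $g_\theta(r):=v(x_0+r\theta)$. This function is real-valued and bounded, and satisfies $g_\theta(0)=v(x_0)=\|v\|_{L^\infty}\geq\|g_\theta\|_{L^\infty}$, so $g_\theta(0)=\|g_\theta\|_{L^\infty}>0$.

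To control its Fourier support, note that $\widehat v=q_m(\xi)\varphi(2^{-j}\xi)\widehat w$. Hence $\operatorname{supp}\widehat v\subset K:=\{\xi: c_02^j\leq|\xi|\leq C_02^j,\ \xi/|\xi|\in\operatorname{supp}q_m\}$, which is compact. Then $v$ is a smooth tempered (band-limited) function, and \Cref{lem:support} applies. It gives
\[
    \operatorname{supp}\widehat{g_\theta}\subset\{\xi\cdot\theta:\xi\in K\}.
\]
For $\xi\in K$, write $\xi=|\xi|\omega$ with $\omega\in\operatorname{supp}q_m$. By \eqref{eq:cone-pro}, $|\omega\cdot\theta|\geq\delta$, so
\[
    \delta c_0 2^j\leq|\xi\cdot\theta|\leq C_0 2^j.
\]
Thus $\operatorname{supp}\widehat{g_\theta}\subset\{a2^j\leq|\eta|\leq A2^j\}$ with $a=\delta c_0$ and $A=C_0$. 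These constants are independent of $\theta$, $j$ and $w$.

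Now \Cref{lem:1d-dmp} applies directly to $g_\theta$ and gives
\[
    \int_0^\infty\frac{g_\theta(0)-g_\theta(r)}{r^{1+\alpha}}\,\d r\geq c(\alpha,\delta c_0,C_0)\,2^{j\alpha}v(x_0).
\]
Integrating over $E(e_m)$ against $\mu$ yields $-Lv(x_0)\geq \tfrac{\kappa}{2}\,c\,2^{\alpha j}v(x_0)$. Here $\delta$ depends only on $\alpha,\kappa,M_\mu$, so the final constant depends only on the stated quantities.

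The main point to handle carefully is the cone geometry: the restriction of $v$ to the line $x_0+\mR\theta$ must still have Fourier support bounded away from $0$ at scale $2^j$. This is exactly why only the good directions $\theta\in E(e_m)$ are kept, and why positivity of the integrand is needed to throw away the others. A minor technical point is justifying that $v$ is smooth and bounded with compact Fourier support, so that \Cref{lem:support} and the splitting of the integral are legitimate. This holds because $v(x_0)=\|v\|_{L^\infty}$ is finite by hypothesis and $\widehat v$ is compactly supported, and the nonnegative integrand makes Tonelli applicable.
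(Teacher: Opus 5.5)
Your proof is correct and follows the paper's argument essentially step by step. You use nonnegativity of the integrand at the global maximum to restrict to $\theta\in E(e_m)$, place $\operatorname{supp}\widehat{g_\theta}$ in $\{c_0\delta 2^j\leq|\eta|\leq C_02^j\}$ via Lemma~\ref{lem:support} and \eqref{eq:cone-pro}, apply the one-sided Lemma~\ref{lem:1d-dmp}, and integrate against $\mu$ using $\mu(E(e_m))\geq\kappa/2$, with the only differences being cosmetic (discarding the bad directions first, and making explicit the compactness of $\operatorname{supp}\widehat v$ and the use of Tonelli, which the paper leaves implicit).
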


\begin{proof}
For the stable operator $L$ defined above,
\[
    -L\varphi(x)
    :=\int_{\mS^{d-1}}\int_0^\infty
    \left[
        \varphi(x)-\varphi(x+r\theta)
    \right]\frac{\d r}{r^{1+\alpha}}\,\mu(\d\theta).
\]
At the maximum point $x_0$, $\nabla v(x_0)=0$. Fix $\theta\in E(e_m)$ and set
\[
    g_\theta(r):=v(x_0+r\theta).
\]
By Lemma \ref{lem:support}, the one-dimensional Fourier support of
$g_\theta$ is contained in the projection of $\operatorname{supp}\widehat v$ onto the line
spanned by $\theta$. Using \eqref{eq:cone-pro} and \eqref{eq:support}, we get
\[
    \operatorname{supp} \widehat{g_\theta}
    \subset
    \{\eta\in\mR: a2^j\leq |\eta|\leq A2^j\},
\]
where $a=c_0\delta$ and $A=C_0$ are independent of $j,m$ and $\theta$.
Since $x_0$ is a global maximum of $v$,
\[
    g_\theta(0)=v(x_0)=\|v\|_{L^\infty}.
\]
By Lemma \ref{lem:1d-dmp},
\begin{equation}\label{eq:v-1d}
    \int_0^\infty \frac{v(x_0)-v(x_0+r\theta)}{r^{1+\alpha}}\,\d r
    \geq c2^{\alpha j}v(x_0),
    \quad \theta\in E(e_m).
\end{equation}
Since $x_0$ is a maximum point, the integrand
$v(x_0)-v(x_0+r\theta)$ is non-negative for all $r>0$ and
$\theta\in\mS^{d-1}$. Hence, integrating \eqref{eq:v-1d} over
$E(e_m)$ and using \eqref{eq:E-lower}, we obtain
\[
    -Lv(x_0)
    \geq
    \int_{E(e_m)}\int_0^\infty
    \frac{v(x_0)-v(x_0+r\theta)}{r^{1+\alpha}}\,\d r\,\mu(\d\theta)
    \geq c2^{\alpha j}v(x_0).
\]
This proves the lemma.
\end{proof}

\section{Proof of Theorem \ref{thm:main}}

\begin{proof}
Apply $P_{j,m}$ to \eqref{eq:PDE}. Let
\[
    v=P_{j,m}u,
    \quad
    G_{j,m}:=P_{j,m}h - [b\cdot\nabla,P_{j,m}]u.
\]
Since $(-L)$ commutes with $P_{j,m}$, then
\begin{equation}\label{eq:v-eq}
    \p_t v+\lambda v -Lv - b\cdot\nabla v=G_{j,m} 
\end{equation}
with initial condition $v(0,\cdot)=P_{j,m}f$. 

We first claim that for every $j\in \mZ$ and $1\leq m\leq N$,
\begin{equation}\label{eq:dyadic-est}
    \|P_{j,m}u\|_{L^\infty_T L^\infty_x}
    \leq
    \|P_{j,m}f\|_{L^\infty(\mR^d)}
    +(\lambda+c2^{\alpha j})^{-1}
    \|G_{j,m}\|_{L^\infty_T L^\infty_x}.
\end{equation}

In fact, assume first that $v$ attains a positive space-time maximum at
$(t_0,x_0)$. If this maximum is no larger than
$\|P_{j,m}f\|_{L^\infty}$, it is already controlled by the initial datum.
Otherwise $t_0>0$. At such a maximum point, using the one-sided time
derivative if $t_0=T$,
\[
    \p_t v(t_0,x_0)\geq0,
    \quad
    \nabla v(t_0,x_0)=0.
\]
By Lemma \ref{lem:angular-dmp},
\[
    (-L)v(t_0,x_0)\geq c2^{\alpha j}v(t_0,x_0).
\]
Evaluating \eqref{eq:v-eq} at $(t_0,x_0)$ yields
\[
    (\lambda+c2^{\alpha j})v(t_0,x_0)
    \leq
    \|G_{j,m}\|_{L^\infty_T L^\infty_x}.
\]
Applying the same argument to $-v$ at a negative minimum gives
\eqref{eq:dyadic-est}. 

If maxima or minima are not attained, one can use standard cut-off argument, as presented in the proof of \cite[Theorem 3.6]{LZ22nonlocal}, to obtain the same estimate.

\medskip

Now let $K_{j,m}$ denote the convolution kernel of $P_{j,m}$. It is not hard to verify that 
\begin{equation}\label{eq:kernel}
\begin{aligned}
    \int_{\mR^d}K_{j,m}(y)\,\d y=0,
    \quad
    &\int_{\mR^d}yK_{j,m}(y)\,\d y=0,
    \\
    \int_{\mR^d}|y|^\beta |K_{j,m}(y)|\,\d y
    \leq C2^{-\beta j},
    \quad
    &\int_{\mR^d}|y|^s |K_{j,m}(y)|\,\d y
    \leq C2^{-sj}.
\end{aligned}
\end{equation}
Noting that 
\[
    P_{j,m}f(x)
    =\int_{\mR^d}K_{j,m}(y)
    \bigl(f(x-y)-f(x)+y\cdot\nabla f(x)\bigr)\,\d y, 
\]
we have 
\begin{equation}\label{eq:Pjm-initial-estimate}
    \|P_{j,m}f\|_{L^\infty(\mR^d)}
    \leq
    C2^{-sj}[f]_{\dot{C}^s}.
\end{equation}

Similarly, 
\[
    P_{j,m}h(t,x)
    =\int_{\mR^d}K_{j,m}(y)
    \bigl(h(t,x-y)-h(t,x)\bigr)\,\d y
\]
gives 
\begin{equation}\label{eq:Pjm-h-estimate}
    \|P_{j,m}h\|_{L^\infty_T L^\infty_x}
    \leq
    C2^{-\beta j}[h]_{L_T^\infty \dot{C}_x^\beta}.
\end{equation}

For the commutator, we write
\[
\begin{aligned}
    [b\cdot\nabla,P_{j,m}]u(t,x)
    &=b(t,x)\cdot\nabla P_{j,m}u(t,x)
      -P_{j,m}\bigl(b(t,\cdot)\cdot\nabla u(t,\cdot)\bigr)(x) \\
    &=\int_{\mR^d}K_{j,m}(y)
    \bigl(b(t,x)-b(t,x-y)\bigr)\cdot\nabla u(t,x-y)\,\d y.
\end{aligned}
\]
Again using \eqref{eq:kernel}, we obtain
\begin{equation}\label{eq:comm-estimate}
    \|[b\cdot\nabla,P_{j,m}]u\|_{L^\infty_T L^\infty_x}
    \leq
    C2^{-\beta j}
    [b]_{L_T^\infty \dot{C}_x^\beta}
    \|\nabla u\|_{L^\infty_T L^\infty_x}.
\end{equation}
Combining \eqref{eq:dyadic-est},
\eqref{eq:Pjm-initial-estimate}, \eqref{eq:Pjm-h-estimate} and
\eqref{eq:comm-estimate}, and using
$\lambda+c2^{\alpha j}\geq c2^{\alpha j}$, gives
\begin{equation*}
    \|P_{j,m}u\|_{L^\infty_T L^\infty_x}
    \leq
    C2^{-(\alpha+\beta)j}
    \left(
        [f]_{\dot{C}^s}
        +[h]_{L_T^\infty \dot{C}_x^\beta}
        +[b]_{L_T^\infty \dot{C}_x^\beta}
        \|\nabla u\|_{L^\infty_T L^\infty_x}
    \right).
\end{equation*}
By \eqref{eq:sum-Pjm},
\begin{equation*}
    \|\dot{\Delta}_j u\|_{L^\infty_T L^\infty_x}
    \leq
    C2^{-sj}
    \left(
        \|f\|_{C^s}
        +[h]_{L_T^\infty \dot{C}_x^\beta}
        +[b]_{L_T^\infty \dot{C}_x^\beta}
        \|\nabla u\|_{L^\infty_T L^\infty_x}
    \right).
\end{equation*}
Taking the supremum over $j \in \mZ$ and using the Littlewood--Paley
characterization of $\dot{C}^s$ for $s\in(1,2)$, we get
\begin{equation*}
    [u]_{L_T^\infty \dot{C}_x^s}
    \leq
    C\left(
        [f]_{\dot{C}^s}
        +[h]_{L_T^\infty \dot{C}_x^\beta}
        +[b]_{L_T^\infty \dot{C}_x^\beta}
        \|\nabla u\|_{L^\infty_T L^\infty_x}
    \right).
\end{equation*}
This proves \eqref{eq:main-pre}.

On the other hand, we also have the parabolic maximum principle
\begin{equation}\label{eq:mp}
    \|u\|_{L^\infty_T L^\infty_x}
    \leq
    \|f\|_{L^\infty(\mR^d)}
    +\lambda^{-1}\|h\|_{L^\infty_T L^\infty_x}.
\end{equation}
In fact, if a positive space-time maximum of $u$ is attained at $t=0$, it is
bounded by $\|f\|_{L^\infty}$. Otherwise, at the maximum point,
\[
    \p_tu\geq0,
    \quad
    \nabla u=0,
    \quad
    (-L)u\geq0.
\]
Hence $\lambda u\leq h$ at that point. Applying the same argument to $-u$
gives the lower bound.

\medskip 

Since $s\in(1,2)$, the interpolation inequality
\begin{equation}\label{eq:interp}
    \|\nabla u\|_{L^\infty_T L^\infty_x}
    \leq
    \eps [u]_{L_T^\infty \dot{C}_x^s}
    +C\eps^{-1/(s-1)}\|u\|_{L^\infty_T L^\infty_x}
\end{equation}
holds for every $\eps\in(0,1)$.

Substituting \eqref{eq:mp} and \eqref{eq:interp} into
\eqref{eq:main-pre} gives
\[
\begin{aligned}
    [u]_{L_T^\infty \dot{C}_x^s}
    &\leq
    C[f]_{\dot{C}^s}
    +C[h]_{L_T^\infty \dot{C}_x^\beta}
    +C\eps[b]_{L_T^\infty \dot{C}_x^\beta}
    [u]_{L_T^\infty \dot{C}_x^s} \\
    &\quad
    +C \eps^{-1/(s-1)} [b]_{L_T^\infty \dot{C}_x^\beta}
    \lambda^{-1}
    \|h\|_{L^\infty_T L^\infty_x} +C\eps^{-1/(s-1)}[b]_{L_T^\infty \dot{C}_x^\beta}
    \|f\|_{L^\infty(\mR^d)}.
\end{aligned}
\]
Choose $\eps>0$ sufficiently small, so that
\[
    C\eps[b]_{L_T^\infty \dot{C}_x^\beta}\leq \frac{1}{2}.
\]
Then
\begin{equation*}
    [u]_{L_T^\infty \dot{C}_x^s}
    \leq
    C\left(1+[b]_{L_T^\infty \dot{C}_x^\beta}^{s/(s-1)}\right)\|f\|_{C^s}
    +C\left(
        1+\lambda^{-1}[b]_{L_T^\infty \dot{C}_x^\beta}^{s/(s-1)}
    \right)\|h\|_{L_T^\infty C_x^\beta}.
\end{equation*}
Combining this and \eqref{eq:mp} gives \eqref{eq:main-final}. The proof is complete.
\end{proof}

\bibliographystyle{alpha}
\bibliography{mybib.bib}

\end{document}